\newtheorem{theorem}{Theorem}
\newtheorem{corollary}[theorem]{Corollary}
\newtheorem{lemma}[theorem]{Lemma}
\newtheorem{proposition}[theorem]{Proposition}
\newtheorem{rem}[theorem]{Remark}
\newtheorem{defin}[theorem]{Definition}
\newenvironment{definition}{\begin{defin}\normalfont\quad}{\end{defin}}
\newtheorem{examp}[theorem]{Example}
\newenvironment{example}{\begin{examp}\normalfont\quad}{\end{examp}}
\newcommand{\seqnum}[1]{\href{http://www.research.att.com/cgi-bin/access.cgi/as/~njas/sequences/eisA.cgi?Anum=#1}{\underline{#1}}}
\begin{document}

\begin{center}
\vskip 1cm{\LARGE\bf Identities Involving Zeros of Ramanujan and Shanks Cubic Polynomials }
\vskip 1cm
\large
Stefano Barbero, Umberto Cerruti, Nadir Murru\\
Department of Mathematics \\
University of  Turin \\
via Carlo Alberto 10, 10123 Turin, Italy\\
\href{mailto:stefano.barbero@unito.it}{\tt stefano.barbero@unito.it}\\
\href{mailto:umberto.cerruti@unito.it}{\tt umberto.cerruti@unito.it}\\
\href{mailto:nadir.murru@unito.it}{\tt nadir.murru@unito.it}\\
\ \\
Marco Abrate\\
DIMEAS, Polytechnic University of Turin\\
Corso Duca degli Abruzzi 24, 10129 Turin, Italy\\

\href{mailto:marco$\_$abrate@polito.it}{\tt marco$\_$abrate@polito.it}\\
\end{center}

\begin{abstract}
In this paper we highlight the connection between Ramanujan cubic polynomials (RCPs) and a class of polynomials, the Shanks cubic polynomials (SCPs), which generate cyclic cubic fields. In this way we provide a new characterization for RCPs and we express the zeros of any RCP in explicit form, using trigonometric functions. Moreover, we observe that a cyclic transform of period three permutes these zeros. As a consequence of these results we provide many new and beautiful identities. Finally we connect RCPs to Gaussian periods, finding a new identity, and we study some integer sequences related to SCPs .
\end{abstract}

\section{Ramanujan and Shanks polynomials}
\begin{definition} A \emph{Ramanujan cubic polynomial} (RCP) $x^3+px^2+qx+r$ is a cubic polynomial, with $p,q,r \in \mathbb R$, $r\not = 0$, which has real zeros and with coefficients satisfying the relation
\begin{equation} \label{cond1-rcp} pr^{\frac{1}{3}}+3r^{\frac{2}{3}}+q=0. \end{equation}
\end{definition}
Shevelev \cite{She} first introduced the definition of RCP, in honour of the great mathematician Srinivasa Ramanujan, since such a class of polynomials arises from a theorem proved by Ramanujan \cite{Rama}. Witula  \cite{Witula} gave the characterization for the RCPs showed in the next theorem.

\begin{theorem}All RCPs have the form: 
\begin{multline*}
x^3 - \frac{P(\gamma-1)}{(\gamma-1)\, (\gamma-2)}\, r^{\frac{1}{3}}\, x^2 -
\frac{P(2-\gamma)}{(1-\gamma)\, (2-\gamma)}\, r^{\frac{2}{3}}\, x  + r = {}\\
{}=
\left( x - \frac{r^{\frac{1}{3}}}{2-\gamma} \right)\,
\left( x - (\gamma -1)\, r^{\frac{1}{3}}  \right)\,
\left( x - \frac{2-\gamma}{1-\gamma}\,r^{\frac{1}{3}} \right),
\end{multline*}
where $r\in \mathbb{R}\setminus \{0\}$,
$\gamma\in \mathbb{R}\setminus \{1,2\}$, and
$$
P(\gamma)=
\gamma^3 - 3\, \gamma + 1 =
\left( \gamma - 2\, \cos \frac{2\, \pi}{9} \right)\,
\left( \gamma - 2\, \cos \frac{4\, \pi}{9} \right)\,
\left( \gamma - 2\, \cos \frac{8\, \pi}{9} \right).
$$
\end{theorem}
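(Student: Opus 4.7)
My plan is to reduce the theorem to a one-parameter normalized cubic by rescaling $x \mapsto r^{1/3} y$, to expose a M\"obius symmetry of order three that cyclically permutes the three real roots, and then to read off the claimed factorization and coefficient formulas from the orbit of any chosen root.

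Writing $\rho = r^{1/3}$ for the real cube root and substituting $x = \rho y$ turns the cubic into $\rho^3\bigl(y^3 - s y^2 + (s-3) y + 1\bigr)$ for some real $s$. Indeed $y_1 y_2 y_3 = -r/\rho^3 = -1$ pins the constant term to $1$, while dividing the RCP relation $p\rho + 3\rho^2 + q = 0$ by $\rho^2$ and identifying $s = -p/\rho = e_1(y)$ forces the coefficient of $y$ to equal $s - 3$. So the problem reduces to parameterizing the normalized family $q(y) = y^3 - s y^2 + (s-3) y + 1$ in the case where all three roots are real.

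The key is the M\"obius map $f(y) = -1/(y-1)$. I would first verify by direct composition that $f^3 = \mathrm{id}$, and that $f$ has no real fixed point (its fixed-point equation $y^2 - y + 1 = 0$ has discriminant $-3$). Next, setting $u = y - 1$ and expanding $u^3 - (s-3) u^2 - s u - 1$, I would establish the identity
\[
(y-1)^3\, q\!\left(f(y)\right) = q(y),
\]
which, since $q(1) = -1 \neq 0$, shows that $f$ sends every real root of $q$ to another root. Combined with $f^3 = \mathrm{id}$ and the absence of real fixed points, the induced permutation on the three real roots must be a $3$-cycle.

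Picking any real root $y_2$ of $q$ and setting $\gamma := y_2 + 1$ (valid because $q(0) = 1$ and $q(1) = -1$ force $y_2 \notin \{0,1\}$, hence $\gamma \notin \{1,2\}$), the remaining two roots are then $f(y_2) = 1/(2-\gamma)$ and $f^2(y_2) = (2-\gamma)/(1-\gamma)$. Rescaling back via $x = \rho y$ produces exactly the factorization displayed in the theorem; the coefficient formulas involving $P$ then follow from Vieta for $q$ together with the routine reductions $e_1(y) = P(\gamma-1)/\bigl((\gamma-1)(\gamma-2)\bigr)$ and the parallel identity for $e_2(y)$. The only real obstacle is discovering the map $f$ itself: one arrives at it either by inspecting the three candidate roots in the statement and noticing they form an orbit under $y \mapsto -1/(y-1)$, or by looking abstractly for a nontrivial real M\"obius map of order three with no real fixed point, which essentially forces the answer.
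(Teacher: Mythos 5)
Your argument is correct, and it is worth saying up front that the paper itself offers no proof of this statement: it is quoted from Witula's article, so there is no in-paper argument to measure yours against. What you have done, in effect, is supply an independent proof of Witula's characterization using exactly the structural fact that the paper only establishes later, in Theorem \ref{root-rcp} and Corollary \ref{cor1}: after the normalization $x=r^{1/3}y$ your cubic $y^3-sy^2+(s-3)y+1$ is $\rho(-s,1,y)$ in the paper's notation, and your map $f(y)=-1/(y-1)$ is precisely the paper's $\eta_1(z)=1/(1-z)$. Your identity $(y-1)^3\,q(f(y))=q(y)$ is the $s=1$ instance of the paper's computation $\rho(h,s,\eta_s(z))=-s^3\rho(h,s,z)/(s-z)^3$, and from there your derivation of the root orbit $\gamma-1$, $1/(2-\gamma)$, $(2-\gamma)/(1-\gamma)$ and of the coefficient formulas via Vieta is sound: I checked both elementary-symmetric-function reductions, and with $a=\gamma-1$ one indeed gets $e_1=P(a)/\bigl(a(a-1)\bigr)$ and $e_2=e_1-3=-P(1-a)/\bigl((-a)(1-a)\bigr)$ as claimed. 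Two small points are left implicit. First, you tacitly assume the three real roots are distinct when you speak of "the induced permutation on the three real roots"; your own setup disposes of this, since a fixed-point-free bijection of the root set whose cube is the identity cannot exist on a set of one or two elements, so distinctness is automatic --- but one sentence saying so would close the gap. Second, the displayed factorization $P(\gamma)=\prod_k\bigl(\gamma-2\cos(2^k\pi/9)\bigr)$ is part of the statement and is untouched by your argument; it is the standard consequence of $\cos 3\theta=4\cos^3\theta-3\cos\theta$ evaluated at $\theta=2\pi/9,\,4\pi/9,\,8\pi/9$ and deserves a line.
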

 We provide a different representation for the RCPs in the following
\begin{theorem}
The polynomial $x^3+px^2+qx+r$ is a RCP if and only if there exist $h,s \in \mathbb R$, $s\not=0$, such that
\begin{equation} \label{cond2-rcp} r=s^3,\quad q=-(h+3)s^2,\quad p=hs.  \end{equation}
\end{theorem}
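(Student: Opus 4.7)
The plan is to prove the equivalence in two directions, exploiting the fact that the scaling $y = x/s$ reduces the polynomial with coefficients $p = hs$, $q = -(h+3)s^2$, $r = s^3$ to $s^3\bigl(y^3 + hy^2 - (h+3)y + 1\bigr)$, a one-parameter family closely related to the Shanks cubic polynomials that the paper is about to introduce.

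For the forward direction, suppose $x^3+px^2+qx+r$ is an RCP. Set $s := r^{1/3}$ (the real cube root, which exists and is nonzero since $r \neq 0$) and $h := p/s$. Then $p = hs$ and $r = s^3$ automatically, and the defining relation \eqref{cond1-rcp} rewrites as $hs\cdot s + 3s^2 + q = 0$, i.e., $q = -(h+3)s^2$. Thus the required parameters $h,s$ exist.

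For the reverse direction, assume $r = s^3$, $q = -(h+3)s^2$, $p = hs$ with $s \neq 0$. Substituting into \eqref{cond1-rcp} gives $hs\cdot s + 3s^2 - (h+3)s^2 = 0$, so the algebraic part of the RCP condition is immediate. What remains is to verify the reality of all three zeros. The substitution $y = x/s$ reduces this to showing that $g_h(y) := y^3 + hy^2 - (h+3)y + 1$ has three real roots for every $h \in \mathbb{R}$. I would compute the discriminant directly from $18bcd - 4b^3 d + b^2c^2 - 4c^3 - 27d^2$ with $b=h$, $c=-(h+3)$, $d=1$, which after expansion collapses to the perfect square $(h^2+3h+9)^2$. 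Since $h^2 + 3h + 9 = (h+3/2)^2 + 27/4 > 0$, this discriminant is strictly positive, so $g_h$ has three distinct real roots, and hence so does the original polynomial.

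The only mild obstacle is recognizing the resulting quartic in $h$ as $(h^2+3h+9)^2$ rather than leaving it as an opaque expression; this perfect-square phenomenon is precisely the hallmark of Shanks-type cubics and foreshadows the bridge between RCPs and SCPs that the remainder of the paper exploits.
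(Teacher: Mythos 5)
Your proof is correct, and its algebraic core (set $s=r^{1/3}$, $h=p/s$ for the forward direction; substitute into \eqref{cond1-rcp} for the converse) is exactly the paper's argument, which is a two-line verification. Where you genuinely go beyond the paper is the converse direction: the definition of an RCP requires not only the coefficient relation \eqref{cond1-rcp} but also that all zeros be real, and the paper's proof silently ignores this second requirement. You close that gap by rescaling to $g_h(y)=y^3+hy^2-(h+3)y+1$ and computing its discriminant, $18bcd-4b^3d+b^2c^2-4c^3-27d^2 = h^4+6h^3+27h^2+54h+81=(h^2+3h+9)^2$, which is strictly positive since $h^2+3h+9=(h+\tfrac{3}{2})^2+\tfrac{27}{4}>0$; hence $g_h$, and therefore the original polynomial, has three distinct real roots. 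I checked the expansion and it is correct. This is a worthwhile addition: it shows the reality condition in the RCP definition is automatic once \eqref{cond1-rcp} holds (so the parametrization genuinely characterizes RCPs), and the perfect-square discriminant $\tau(h)^2$ you uncover is precisely the quantity that reappears in Theorem \ref{root-scp} and is the reason the SCPs generate cyclic cubic fields. The only caveat is that strictly speaking the reality check belongs to the Corollary/definitional level rather than to the three displayed equations themselves, but including it makes the argument complete where the paper's is not.
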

\begin{proof}
If equations \eqref{cond2-rcp} hold, then it is straightforward to check equation \eqref{cond1-rcp}. Vice--versa, if equation \eqref{cond1-rcp} holds, then for $h=\frac{p}{r^{\frac{1}{3}}}$ and $s=r^{\frac{1}{3}}$ we obtain equations \eqref{cond2-rcp}.
\end{proof}
\begin{corollary}
The polynomial $x^3+px^2+qx+r$ is a RCP if and only if it has the form
$$\rho(h,s,x)=x^3+hsx^2-(h+3)s^2x+s^3,$$
where $h,s\in\mathbb R$, $s\not=0$.
\end{corollary}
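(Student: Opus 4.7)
The plan is to deduce this corollary directly from the preceding theorem by substitution, with essentially no further work required. The preceding theorem asserts that $x^3+px^2+qx+r$ is a RCP if and only if there exist $h,s\in\mathbb{R}$ with $s\neq 0$ satisfying the three relations $r=s^3$, $q=-(h+3)s^2$, and $p=hs$ in \eqref{cond2-rcp}.

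For the forward direction, I would assume the polynomial is a RCP, invoke the theorem to obtain such $h$ and $s$, and then substitute these expressions for $p$, $q$, $r$ into $x^3+px^2+qx+r$. The result is exactly
\[
x^3 + (hs)x^2 + (-(h+3)s^2)x + s^3 = \rho(h,s,x),
\]
so the polynomial has the claimed form. For the converse, if the polynomial equals $\rho(h,s,x)$ for some $h,s\in\mathbb{R}$ with $s\neq 0$, then comparing coefficients gives $p=hs$, $q=-(h+3)s^2$, and $r=s^3$, which are precisely the conditions \eqref{cond2-rcp} of the theorem, so the polynomial is a RCP.

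There is no genuine obstacle here; the corollary is a cosmetic reformulation of the previous theorem, simply packaging the parametrization \eqref{cond2-rcp} into a single named polynomial $\rho(h,s,x)$ for convenience in later sections. The only thing to check is that the substitution is performed correctly and that the condition $s\neq 0$ (equivalent to $r\neq 0$ in the original RCP definition) is carried over in both directions.
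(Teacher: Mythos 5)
Your proposal is correct and matches the paper's (implicit) treatment: the corollary is stated without proof precisely because it is the immediate substitution of the parametrization $r=s^3$, $q=-(h+3)s^2$, $p=hs$ from the preceding theorem into the polynomial, exactly as you do in both directions. Nothing further is needed.
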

\begin{definition}
We call \emph{Shanks cubic polynomials} (SCPs) the polynomials
\begin{equation} \label{shanks} \rho(h,-1,x)=x^3-hx^2-(h+3)x-1,  \end{equation}
since Shanks \cite{Sha} deeply studied the cyclic cubic fields generated by the polynomials \eqref{shanks}.
\end{definition}
 In the following theorem, we provide an explicit expression for the zeros of SCPs using trigonometric functions. Then we use this result and the connection between SCPs and RCPs in order to obtain a closed expression for the RCPs zeros.
\begin{lemma} \label{newsolve}
Let us consider the cubic polynomial $Q(x)=ax^3+bx^2+cx+d$ and the polynomial $P(w)=w^2+fx-\frac{e^3}{27}$, where $e=\frac{1}{a}\left(c-\frac{b^2}{3a}\right), f=\frac{1}{a}\left(d+\frac{2b^3}{27a^2}-\frac{bc}{3a}\right)$. If the polynomial $P(w)$ has two complex zeros $\alpha\pm i\beta$, then the polynomial $Q(x)$ has the zeros
$$-\cfrac{b}{3a}+2 \sqrt[3]{\sqrt{\alpha^2+\beta^2}}\cos \left(\frac{1}{3}\left(\arctan \left(\frac{\beta}{\alpha}\right)+k\pi\right)\right),\quad k=0,2,4,\quad \text{if} \ \alpha>0$$
$$-\cfrac{b}{3a}-2 \sqrt[3]{\sqrt{\alpha^2+\beta^2}}\cos \left(\frac{1}{3}\left(\arctan \left(\frac{\beta}{\alpha}\right)+k\pi\right)\right),\quad k=0,2,4,\quad \text{if} \ \alpha<0.$$
\end{lemma}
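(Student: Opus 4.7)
The plan is to reduce to Cardano's formula and handle \textsl{casus irreducibilis} explicitly.

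\textbf{Step 1: Depress the cubic.} First I would perform the standard Tschirnhaus substitution $x=y-\tfrac{b}{3a}$ in $Q(x)=ax^3+bx^2+cx+d$. A routine computation shows that $Q(x)=a(y^3+ey+f)$ with the $e$ and $f$ defined in the statement. Hence the zeros of $Q$ are of the form $-\tfrac{b}{3a}+y$, where $y$ runs over the three zeros of the depressed cubic $y^3+ey+f=0$; this accounts for the outer shift $-\tfrac{b}{3a}$ in both displayed formulas.

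\textbf{Step 2: Apply Cardano and identify the resolvent.} Next I would write $y=u+v$ with the side condition $uv=-e/3$, so that $u^3+v^3=-f$ and $u^3v^3=-e^3/27$. Then $u^3$ and $v^3$ are the two roots of the resolvent quadratic $w^2+fw-\tfrac{e^3}{27}=0$, which is exactly the polynomial $P(w)$ in the statement (up to the obvious typo $fx\mapsto fw$). By hypothesis these roots are $\alpha\pm i\beta$, so I may take $u^3=\alpha+i\beta$ and $v^3=\alpha-i\beta=\overline{u^3}$; the condition $uv\in\mathbb{R}$ lets me choose $v=\bar{u}$, whence $y=u+\bar{u}=2\mathrm{Re}(u)$.

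\textbf{Step 3: Extract cube roots in polar form.} I would write $u^3=\rho\,e^{i\theta}$ with $\rho=\sqrt{\alpha^2+\beta^2}$ and $\theta$ an argument of $\alpha+i\beta$. The three cube roots are $u_k=\rho^{1/3}e^{i(\theta+2k\pi)/3}$ for $k=0,1,2$, giving
\[
y_k=2\sqrt[3]{\sqrt{\alpha^2+\beta^2}}\cos\!\left(\tfrac{\theta+2k\pi}{3}\right),\qquad k=0,1,2,
\]
which already matches the formula in the lemma once I identify the correct $\theta$ and note $2k\pi=k\pi$ with $k\in\{0,2,4\}$.

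\textbf{Step 4: The two sign cases.} This is the subtle point and the only real obstacle. When $\alpha>0$ the principal value $\arctan(\beta/\alpha)$ is itself a valid argument of $\alpha+i\beta$, so $\theta=\arctan(\beta/\alpha)$ and the first displayed formula follows directly. When $\alpha<0$, however, the correct argument is $\theta=\arctan(\beta/\alpha)+\pi$, so the cosines become $\cos\!\bigl(\tfrac{\arctan(\beta/\alpha)+(2k+1)\pi}{3}\bigr)$. Using $\cos(\varphi+\pi)=-\cos(\varphi)$ together with the $2\pi$-periodicity of cosine, I would verify that the multiset
\[
\Bigl\{\cos\!\tfrac{\arctan(\beta/\alpha)+\pi}{3},\;\cos\!\tfrac{\arctan(\beta/\alpha)+3\pi}{3},\;\cos\!\tfrac{\arctan(\beta/\alpha)+5\pi}{3}\Bigr\}
\]
coincides with
\[
\Bigl\{-\cos\!\tfrac{\arctan(\beta/\alpha)}{3},\;-\cos\!\tfrac{\arctan(\beta/\alpha)+2\pi}{3},\;-\cos\!\tfrac{\arctan(\beta/\alpha)+4\pi}{3}\Bigr\},
\]
which is the second displayed formula. (The matching relies on $\cos\!\tfrac{\arctan(\beta/\alpha)+7\pi}{3}=\cos\!\tfrac{\arctan(\beta/\alpha)+\pi}{3}$.) Combining Steps 1--4 yields the claimed zeros of $Q(x)$.
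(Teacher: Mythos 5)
Your proof is correct and follows essentially the same route as the paper: reduce to the resolvent quadratic $P(w)$ (the paper's Vieta substitution $x=-\tfrac{b}{3a}+z+\tfrac{\mu}{z}$ is just your depression plus Cardano's $y=u+v$ composed into one step), extract cube roots in polar form, and track the branch of $\arctan$ according to the sign of $\alpha$. If anything, your Step 4 is more careful than the paper's, which merely asserts that the overall sign flips when $\alpha<0$ without verifying the multiset identity you check explicitly.
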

\begin{proof}
It is well known that the polynomial $Q(x)$ becomes the quadratic polynomial $P(w)$ by means of the Vieta substitutions $x=-\frac{b}{3a}+z+\frac{\mu}{z}$, $z^3=w$, where $\mu=-\frac{e}{3}$. Thus from the zeros $\alpha\pm i\beta$ of $P(w)$ we find the zeros of $Q(x)$ using the relations
\begin{equation} \label{roots} -\cfrac{b}{3a}+z_i+\cfrac{s}{z_i},\quad i=1,\ldots,6 \end{equation}
where 
$$z_1=\sqrt[3]{\alpha+i\beta}, \ z_2=\omega\sqrt[3]{\alpha+i\beta}, \ z_3=\omega^2\sqrt[3]{\alpha+i\beta},$$ $$z_4=\sqrt[3]{\alpha-i\beta}, \ z_5=\omega\sqrt[3]{\alpha-i\beta}, \ z_6=\omega^2\sqrt[3]{\alpha-i\beta},$$
and the number $\omega$ represents one of the complex roots of unity. Obviously, in equations \eqref{roots} some $z_i$ determine the same values. Therefore we find exactly three different zeros for $Q(x)$. Now ,without loss of generality, we focus on $z_1$, whose complex exponential form is  
$$z_1=\sqrt[3]{\rho} e^{i\frac{\theta}{3}},$$
where $\rho=\sqrt{\alpha^2+\beta^2}$ and $\theta=\arctan\left(\frac{\beta}{\alpha}\right)$.
Furthermore, observing that  $s=\sqrt[3]{\rho^2}$, from  equations \eqref{roots} we have
$$-\cfrac{b}{3a}+2\sqrt[3]{\rho}\cos \left(\frac{\theta}{3}\right)=-\cfrac{b}{3a}\pm 2 \sqrt[3]{\sqrt{\alpha^2+\beta^2}}\cos\left( \frac{1}{3}\left(\arctan \left(\frac{\beta}{\alpha}\right)\right)\right),$$
and plus or minus sign depends on whether $\alpha>0$ or $\alpha<0$, respectively. In a similar way we obtain the remaining zeros, proving the thesis.
\end{proof}
\begin{theorem} \label{root-scp}
The zeros of the SCP $\rho(h,-1,x)$ are 
$$\frac{1}{3} \left(h+2 \sqrt{\tau(h)} \cos\left(\frac{1}{3} \left(\arctan\left(\frac{3 \sqrt{3}}{3+2 h}\right)+k\pi \right)\right)\right) \quad \text{if}\quad h\geq-\frac{3}{2},$$
$$\frac{1}{3} \left(h-2 \sqrt{\tau(h)} \cos\left(\frac{1}{3} \left(\arctan\left(\frac{3 \sqrt{3}}{3+2 h} \right)+k\pi\right)\right)\right)\quad \text{if}\quad h\leq-\frac{3}{2},$$
where $\tau(h)=h^2+3h+9$ and $k=0,2,4$. In particular, if $h=-\frac{3}{2}$, we have $\arctan(+\infty)=\frac{\pi}{2}$ in the first formula and $\arctan(-\infty)=-\frac{\pi}{2}$ in the second formula, retrieving the roots $1$,$-\frac{1}{2}$,$-2$ of the SCP $\rho(-\frac{3}{2},-1,x)$.
\end{theorem}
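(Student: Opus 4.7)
The plan is to apply Lemma \ref{newsolve} directly to $Q(x)=\rho(h,-1,x)$, i.e., with $a=1$, $b=-h$, $c=-(h+3)$, $d=-1$. First I would compute the depressed parameters
$$e = c-\frac{b^2}{3}= -(h+3)-\frac{h^2}{3}= -\frac{\tau(h)}{3}, \qquad f = d+\frac{2b^3}{27}-\frac{bc}{3}= -\frac{2h^3+9h^2+27h+27}{27}.$$
The key algebraic observation, which I would verify by polynomial division (or by checking $h=-3/2$ is a root of the numerator), is the factorization
$$2h^3+9h^2+27h+27 = (2h+3)\,\tau(h),$$
so that $f = -\dfrac{(2h+3)\tau(h)}{27}$. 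This is the main computational input; once it is in hand everything else collapses nicely.

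Next I would check that the auxiliary polynomial $P(w)=w^2+fw-e^3/27$ really has complex conjugate zeros, which requires its discriminant $f^2+4e^3/27$ to be negative. A short calculation using the factorization of $f$ gives
$$f^2+\frac{4e^3}{27} = \frac{\tau(h)^2\bigl[(2h+3)^2-4\tau(h)\bigr]}{729} = -\frac{\tau(h)^2}{27},$$
using the cancellation $(2h+3)^2-4\tau(h)=-27$; since $\tau(h)=h^2+3h+9$ has negative discriminant and so is strictly positive, this quantity is indeed negative. From this I read off
$$\alpha = -\frac{f}{2}=\frac{(2h+3)\tau(h)}{54}, \qquad \beta = \frac{\tau(h)}{6\sqrt{3}}, \qquad \alpha^2+\beta^2 = -\frac{e^3}{27}= \frac{\tau(h)^3}{729}.$$

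The final step is to plug these into the formulas of Lemma \ref{newsolve}. Taking cube roots gives $2\sqrt[3]{\sqrt{\alpha^2+\beta^2}}=\frac{2\sqrt{\tau(h)}}{3}$, and the ratio simplifies cleanly:
$$\frac{\beta}{\alpha} = \frac{\tau(h)/(6\sqrt{3})}{(2h+3)\tau(h)/54} = \frac{3\sqrt{3}}{2h+3}= \frac{3\sqrt{3}}{3+2h}.$$
Since $-b/(3a)=h/3$ and $\tau(h)>0$, the sign of $\alpha$ coincides with the sign of $2h+3$, i.e., $\alpha>0$ iff $h>-3/2$ and $\alpha<0$ iff $h<-3/2$; substituting into the two displayed formulas of the lemma gives exactly the two expressions of the theorem.

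The remaining case $h=-3/2$ has $\alpha=0$ and must be handled as the limit: $\arctan(\beta/\alpha)\to \pi/2$ (respectively $-\pi/2$) as $h\to (-3/2)^+$ (respectively $(-3/2)^-$), and with $\tau(-3/2)=27/4$ the formulas give $-\tfrac12+\sqrt{3}\cos(\pi/6+k\pi/3)$ for $k=0,2,4$, which are $1$, $-2$, $-\tfrac12$; a direct check against $\rho(-3/2,-1,x)=x^3+\tfrac32 x^2-\tfrac32 x-1$ confirms these are its zeros. The main obstacle is not conceptual but bookkeeping: one must keep the sign of $\alpha$ straight and carry through the factorization of $2h^3+9h^2+27h+27$ together with the identity $(2h+3)^2-4\tau(h)=-27$, which is what makes $\beta/\alpha$ collapse to the tidy expression $3\sqrt{3}/(3+2h)$.
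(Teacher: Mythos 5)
Your proof is correct and follows essentially the same route as the paper: apply Lemma \ref{newsolve} to $\rho(h,-1,x)$, compute $e$, $f$, verify that $P(w)$ has conjugate complex zeros, and read off $\alpha$, $\beta$, $\alpha^2+\beta^2=\tau(h)^3/3^6$ and $\beta/\alpha=3\sqrt{3}/(3+2h)$; you simply supply more of the intermediate algebra (the factorization $2h^3+9h^2+27h+27=(2h+3)\tau(h)$ and the identity $(2h+3)^2-4\tau(h)=-27$) than the paper does. Note that your value $e=-\tau(h)/3$ is the correct one (the paper writes $e=\tau(h)/3$, apparently a sign slip, though its subsequent values of $\alpha$, $\beta$ and $\alpha^2+\beta^2$ are consistent with the negative sign).
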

\begin{proof}
If $h=-\frac{3}{2}$, the proof is straightforward.
If $h>-\frac{3}{2}$, we apply lemma \ref{newsolve}. In this case we have $e=\frac{\tau(h)}{3}$, $f=-\frac{(2h+3)\tau(h)}{27}$. The zeros of $P(x)$ are 
$$\alpha\pm i\beta=\frac{1}{54}((3+2h)\tau(h)\pm 3i\sqrt{3}\tau(h))$$
with $\alpha>0$ and
$$\alpha^2+\beta^2=\cfrac{(3+2h)^2\tau(h)^2+27\tau(h)^2}{54^2}=\cfrac{\tau(h)^3}{3^6},\quad \cfrac{\beta}{\alpha}=\cfrac{3\sqrt{3}}{3+2h}\ .$$
Similarly, using lemma \ref{newsolve}, we can prove the case $h<-\frac{3}{2}$, when $\alpha<0$.
\end{proof}
\begin{theorem} \label{root-rcp}
If we consider the zeros set $\zeta(h,s)$ of the RCP $\rho(h,s,x)$ and $\alpha$ is one of these zeros, then
\begin{equation}\label{eta}\zeta(h,s)=\left\{\alpha,\cfrac{s^2}{s-\alpha},-\cfrac{s(s-\alpha)}{\alpha}\right\}=\{\alpha,\eta_s(\alpha),\eta_s^2(\alpha)\},\end{equation}
where $\eta_s(z)=\cfrac{s^2}{s-z}$.
Moreover, for any RCP $\rho\left(\frac{p}{r^{\frac{1}{3}}},r^{\frac{1}{3}},x\right)=x^3+px^2+qx+r$  we have
\begin{equation}\label{zeta}\zeta\left(\frac{p}{r^{\frac{1}{3}}},r^{\frac{1}{3}}\right)=-r^{\frac{1}{3}}\zeta\left(\cfrac{p}{r^{\frac{1}{3}}},-1\right).\end{equation}
\end{theorem}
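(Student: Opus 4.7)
The plan is to prove the two claims of Theorem~\ref{root-rcp} separately: first the orbit description of $\zeta(h,s)$ under $\eta_s$, and then the scaling identity \eqref{zeta}, which will follow from a single substitution.

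For the first claim, I would begin by checking that $\eta_s$ maps zeros of $\rho(h,s,x)$ to zeros. Setting $u=s-\alpha$, the substitution $x=\eta_s(\alpha)=s^2/(s-\alpha)$ into $\rho(h,s,x)$ and multiplying through by $(s-\alpha)^3$ yields
$$s^6+hs^5(s-\alpha)-(h+3)s^4(s-\alpha)^2+s^3(s-\alpha)^3 = s^3\bigl[u^3-(h+3)su^2+hs^2u+s^3\bigr],$$
while a direct expansion of $\rho(h,s,s-u)$ shows that the bracket equals $-\rho(h,s,\alpha)=0$. Hence $\eta_s(\alpha)\in\zeta(h,s)$. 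A short direct computation then gives $\eta_s^2(\alpha)=-s(s-\alpha)/\alpha$ (note $\alpha\neq0$ because $\rho(h,s,0)=s^3\neq0$) and $\eta_s^3(\alpha)=\alpha$, so the three listed values form a closed $\eta_s$-orbit.

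To conclude that these three values are distinct and therefore exhaust $\zeta(h,s)$, I would observe that any real fixed point of $\eta_s$ would solve $\alpha^2-s\alpha+s^2=0$, whose discriminant is $-3s^2<0$; hence $\eta_s$ has no real fixed points. Since the RCP has three real zeros by definition and $\eta_s$ permutes them, the orbit of any single zero under $\eta_s$ must be a full $3$-cycle, giving $\zeta(h,s)=\{\alpha,\eta_s(\alpha),\eta_s^2(\alpha)\}$.

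For the second claim, I would simply evaluate $\rho(h,-1,-\alpha/s)$ and multiply by $-s^3$, obtaining
$$-s^3\,\rho(h,-1,-\alpha/s)=\alpha^3+hs\alpha^2-(h+3)s^2\alpha+s^3=\rho(h,s,\alpha).$$
Thus $\alpha\in\zeta(h,s)$ if and only if $-\alpha/s\in\zeta(h,-1)$, which, upon specializing $h=p/r^{1/3}$ and $s=r^{1/3}$, rearranges to \eqref{zeta}. The main obstacle is bookkeeping in the first step, where one must identify the polynomial identity relating $\rho(h,s,\eta_s(\alpha))$ to $\rho(h,s,\alpha)$; once the substitution $u=s-\alpha$ is made, this reduces to matching coefficients and is entirely routine.
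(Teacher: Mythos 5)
Your proof is correct and follows essentially the same route as the paper: the identity $(s-\alpha)^3\rho(h,s,\eta_s(\alpha))=-s^3\rho(h,s,\alpha)$ is exactly the paper's key relation $\rho(h,s,\eta_s(z))=-s^3\rho(h,s,z)/(s-z)^3$, and the scaling $\zeta(h,s)=-s\,\zeta(h,-1)$ is obtained by the same substitution. You are in fact slightly more careful than the paper, which asserts $\zeta(h,s)=\{\alpha,\eta_s(\alpha),\eta_s^2(\alpha)\}$ without verifying that the three values are distinct; your observation that a real fixed point of $\eta_s$ would require $\alpha^2-s\alpha+s^2=0$, of discriminant $-3s^2<0$, closes that small gap.
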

\begin{proof}
Let us observe that if $\alpha$ is a zero of $\rho(h,s,x)$, then $\alpha\not=0$, $\alpha\not=s$  and the transform $\eta_s$ is periodic with period 3, satisfying $\eta_s^3(z)=z$. We easily check that
$$\rho(h,s,\eta_s(z))=-\cfrac{s^3\rho(h,s,z)}{(s-z)^3},\quad \rho(h,s,\eta_s^2(z))=-\cfrac{s^3\rho(h,s,z)}{z^3}.$$
So we have
$$\rho(h,s,\eta_s(\alpha))=-\cfrac{s^3\rho(h,s,\alpha)}{(s-\alpha)^3}=0,\  \rho(h,s,\eta_s^2(\alpha))=-\cfrac{s^3\rho(h,s,\alpha)}{\alpha^3}=0.$$
We also point out that for any SCP 
$$\zeta(h,-1)=\left\{\alpha,-\cfrac{1}{\alpha + 1},-\cfrac{(\alpha + 1)}{\alpha}\right\} ,\quad \eta_{-1}(z)=-\cfrac{1}{z + 1}.$$
The zeros of $\rho\left(\frac{p}{r^{\frac{1}{3}}},-1,x\right)=x^3-\frac{p}{r^{\frac{1}{3}}}x^2-\left(\frac{p}{r^{\frac{1}{3}}}+3\right)x-1$ are the elements of the set $\zeta\left(\frac{p}{r^{\frac{1}{3}}},-1\right)$. Thus, if  $\rho\left(\frac{p}{r^{\frac{1}{3}}},-1,\alpha\right)=0$, we have
$\zeta\left(\frac{p}{r^{\frac{1}{3}}},-1\right)=\left\{\alpha,-\cfrac{1}{\alpha+1},-\cfrac{\alpha+1}{\alpha}\right\}.$
Furthermore we easily prove that $\rho\left(\frac{p}{r^{\frac{1}{3}}},r^{\frac{1}{3}},-r^{\frac{1}{3}}\alpha\right)=0$, in fact
$$-r\alpha^3+pr^{\frac{2}{3}}\alpha^2-qr^{\frac{1}{3}}\alpha+r=-r\left(\alpha^3-\cfrac{p}{r^{\frac{1}{3}}}\alpha^2+\left(\cfrac{p}{r^{\frac{1}{3}}}+3\right)\alpha-1\right)=0.$$
Therefore we obtain
\begin{eqnarray*}
\zeta\left(\frac{p}{r^{\frac{1}{3}}},r^{\frac{1}{3}}\right)&=&\left\lbrace-r^{\frac{1}{3}}\alpha,\eta_{r^{\frac{1}{3}}}\left(-r^{\frac{1}{3}}\alpha\right),\eta_{r^{\frac{1}{3}}}^2\left(-r^{\frac{1}{3}}\alpha\right)\right\rbrace=\\
 &=&\left\{-r^{\frac{1}{3}}\alpha,\cfrac{r^{\frac{2}{3}}}{r^{\frac{1}{3}}+r^{\frac{1}{3}}\alpha},\cfrac{r^{\frac{1}{3}}\left(r^{\frac{1}{3}}+r^{\frac{1}{3}}\alpha\right)}{-r^{\frac{1}{3}}\alpha}\right\}=\\
 &=&-r^{\frac{1}{3}}\left\{\alpha,-\cfrac{1}{\alpha+1},-\cfrac{\alpha+1}{\alpha}\right\}=-r^{\frac{1}{3}}\zeta\left(\frac{p}{r^{\frac{1}{3}}},-1\right).
\end{eqnarray*}
\end{proof}
In the next corollary we higlight some interesting consequences of the previous theorems.
\begin{corollary}\label{cor1}
Recalling that $h=\cfrac{p}{r^{\frac{1}{3}}}$, $s=r^{\frac{1}{3}}$ and $\tau(h)=h^2+3h+9$,   
\begin{enumerate}
\item \begin{equation}\label{zeta1} \zeta(h,s)=-s\zeta(h, 1);\end{equation}
\item the zeros of any RCP $\rho(h,s,x)$  have the following expressions in terms of $\cos$ and $\arctan$ functions
\begin{equation}\label{rcp-zeros} -\cfrac{s}{3}\left(h\pm2\sqrt{\tau(h)}\cos\left(\frac{1}{3}\left(\arctan\left(\cfrac{3\sqrt{3}}{3+2h}\right)+k\pi\right)\right)\right)\quad k=0,2,4; \end{equation}
\item if we consider a zero $\alpha$ of the RCP $\rho(h,s,x)\in \mathbb Q[X]$, then the Galois group of the extension $\mathbb Q(\alpha)$ over $\mathbb Q$ has three elements: the identity automorphism and the automorphisms $\eta_s$ and $\eta^2_s$.
\end{enumerate}
\end{corollary}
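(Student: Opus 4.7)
The strategy is to dispatch the three items of the corollary in turn, each reducing to an earlier theorem together with a brief additional observation.

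For item~(1), the plan is a direct scaling argument. From the explicit form $\rho(h,s,x) = x^3 + hsx^2 - (h+3)s^2x + s^3$ one checks that $\rho(h,s,sy) = s^3\,\rho(h,1,y)$, so the zeros of $\rho(h,s,x)$ are obtained from the zeros of $\rho(h,1,y)$ by scaling by $s$. Equivalently, identity~(\ref{zeta}) of Theorem~\ref{root-rcp} specialized to $r=s^3$, $p=hs$ yields $\zeta(h,s) = -s\,\zeta(h,-1)$, and applying the same identity with $s=1$ gives $\zeta(h,1) = -\zeta(h,-1)$; combining the two transfers the scaling relation from the SCP $\rho(h,-1,x)$ to $\rho(h,1,x)$ as required.

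For item~(2), the plan is to compose item~(1) with Theorem~\ref{root-scp}, which furnishes the trigonometric expression for the zeros of the companion SCP $\rho(h,-1,x)$. Multiplying those expressions by the appropriate factor of $\pm s$ and pulling the $\frac{1}{3}$ outside reproduces (\ref{rcp-zeros}); the dichotomy $h \geq -3/2$ versus $h \leq -3/2$ in Theorem~\ref{root-scp} collapses into the single $\pm$ sign in~(\ref{rcp-zeros}). This step is essentially algebraic bookkeeping.

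For item~(3), the key input is equation~(\ref{eta}) of Theorem~\ref{root-rcp}, which exhibits the three zeros of $\rho(h,s,x)$ as $\alpha,\eta_s(\alpha),\eta_s^2(\alpha)$, all rational expressions in any fixed zero $\alpha$. Hence $\mathbb{Q}(\alpha)$ already contains every zero, so it is the splitting field of the polynomial and $\mathbb{Q}(\alpha)/\mathbb{Q}$ is Galois. Assuming the RCP is irreducible over $\mathbb{Q}$ (otherwise the statement is vacuous), we have $[\mathbb{Q}(\alpha):\mathbb{Q}] = 3$, and a degree-three Galois extension is automatically cyclic of order three; its two non-trivial automorphisms are determined by the assignments $\alpha \mapsto \eta_s(\alpha)$ and $\alpha \mapsto \eta_s^2(\alpha)$, and the identity $\eta_s^3 = \mathrm{id}$ (already used in the proof of Theorem~\ref{root-rcp}) makes them mutually inverse. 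The main subtlety here is a conceptual one: one must carefully distinguish the M\"obius transformation $\eta_s$, which is a rational function involving $s$, from the field automorphism of $\mathbb{Q}(\alpha)$ it induces through its action on $\alpha$. The identification is justified because a $\mathbb{Q}$-embedding of $\mathbb{Q}(\alpha)$ is pinned down by its value on $\alpha$, and by the splitting-field property its image lies in $\mathbb{Q}(\alpha)$ itself.
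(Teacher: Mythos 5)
Your proposal follows essentially the same route as the paper: item (1) is read off from equation (\ref{zeta}) of Theorem \ref{root-rcp}, item (2) combines it with the trigonometric zeros of the SCP from Theorem \ref{root-scp}, and item (3) rests on equation (\ref{eta}) together with the period-3 property of $\eta_s$; your treatment of item (3) is in fact more careful than the paper's one-line argument, which silently assumes the RCP is irreducible over $\mathbb{Q}$ and does not note that $\mathbb{Q}(\alpha)$ is the splitting field. One caveat: both your direct scaling $\rho(h,s,sy)=s^3\rho(h,1,y)$ and your double application of (\ref{zeta}) yield $\zeta(h,s)=s\,\zeta(h,1)=-s\,\zeta(h,-1)$, which is \emph{not} the literal statement $\zeta(h,s)=-s\,\zeta(h,1)$ of (\ref{zeta1}); the ``$1$'' there is evidently a misprint for ``$-1$'' (as item (2), which carries the factor $-\frac{s}{3}$ in front of the SCP zeros $\frac{1}{3}(h\pm\cdots)$, and the paper's own proof citing (\ref{zeta}) both confirm), so you should not have concluded ``as required'' without flagging the sign discrepancy.
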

\begin{proof}
The equality (\ref{zeta1}) directly follows from relation (\ref{zeta}) and allows us to prove 
equations (\ref{rcp-zeros}), using the closed expression for the zeros of any SCP given by theorem \ref{root-scp} . Finally, the relation (\ref{eta}) clearly shows that the identity automorphism and the automorphisms $\eta_s$ and $\eta^2_s$ are the only elements of the Galois group of the extension $\mathbb Q(\alpha)$, since the transform $\eta_s$ is periodic with period 3.
\end{proof}

\section{Identities}
With the expressions for the RCPs zeros, we can derive some interesting and unexpected identities. Witula, Slota and Warzynski \cite{quasifib} showed that $2\cos\left(\cfrac{2\pi}{7}\right),$ $2\cos\left(\cfrac{4\pi}{7}\right)$ and $2\cos\left(\cfrac{8\pi}{7}\right)$ are the zeros of the RCP $\rho(-1,-1,x)=x^3+x^2-2x-1$ ($h=-1, s=-1$). Thus, for example, we find the following identity
$$2\cos\left(\frac{2\pi}{7}\right)=\frac{1}{3} \left(-1+2 \sqrt{7} \cos\left(\frac{1}{3} \arctan\left(3 \sqrt{3}\right)\right)\right).$$ 
 
Many other identities may result from previous theorems observing that a RCP with zero $\alpha$ is
\begin{eqnarray} \label{rcp-extended}
\rho(h,s,x) &=& x^3+hsx^2-(h+3)s^2x+s^3 =\nonumber \\
 &=&(x-\alpha)\left(x-\frac{s^2}{s-\alpha }\right)\left(x+\frac{s (s-\alpha )}{\alpha }\right) =\nonumber\\ 
& =& x^3+\frac{s^3-3 s^2 \alpha+\alpha^3}{(s-\alpha) \alpha}x^2+\frac{-s^4+3 s^2 \alpha^2-s \alpha^3}{(s-\alpha) \alpha}x+s^3. \end{eqnarray}
Thanks to these equalities we can easily express any real number $\alpha$ in terms of $\cos$ and $\arctan$ functions . Indeed, choosing a nonzero real number $s\not=\alpha$ and $h=\frac{s^3-3s^2\alpha+\alpha^3}{s(s-\alpha)\alpha}$, we create a RCP, with one of the zeros equal to $\alpha$, on which we use the results of 
theorems \ref{root-scp} and \ref{root-rcp} .
\begin{example}
We select $\alpha=\sqrt{2}$ and $s=3\sqrt{2}$. In this way, $h=\frac{1}{6}$ and $\sqrt{2}$ is a zero of $\rho(\frac{1}{6},3\sqrt{2},x)$. By theorem \ref{root-scp} we obtain the zeros of $\rho(\frac{1}{6},-1,x)$ and multiplying these zeros by $-3\sqrt{2}$ we get the zeros of $\rho(\frac{1}{6},3\sqrt{2},x)$. We approximately evaluate these zeros (with any mathematical software) in order to equal the correct one to $\sqrt{2}$. In this case we find
$$\sqrt{2}=-\frac{1+14\sqrt{7}\cos\left(\frac{1}{3}\left(\arctan\left(\frac{9\sqrt{3}}{10}\right)+4\pi\right)\right)}{3\sqrt{2}},$$
and with further simplifications we have the identity
$$1=\sqrt{7}\cos\left(\frac{1}{3}\arctan\left(\frac{9\sqrt{3}}{10}\right)\right)-\sqrt{21}\sin\left(\frac{1}{3}\arctan\left(\frac{9\sqrt{3}}{10}\right)\right).$$
\end{example} 
\begin{example}
We consider $\alpha=\pi$ and $s=1$. Hence, $h=\frac{1-3\pi+\pi^3}{(1-\pi)\pi}<-\frac{3}{2}$ and $\pi$ is a zero of $\rho(h,1,x)$. Using theorem \ref{root-scp} we immediately have the zeros of $\rho(h,-1,x)$ and multiplying them by -1 we obtain the  zeros of $\rho(h,1,x)$. We approximately evaluate these zeros (with any mathematical software) in order to equal the correct one to $\pi$. In this case, after some algebraic calculations, we find
$$\frac{1-3 \pi +\pi ^3+2 \left(1-\pi +\pi ^2\right)^{\frac{3}{2}} \cos\left(\frac{1}{3} \arctan\left(\frac{3 \sqrt{3} (-1+\pi ) \pi }{2-3 \pi -3 \pi ^2+2 \pi ^3}\right)\right)}{3 (-1+\pi ) \pi }=\pi.$$
 From the previous equality  we derive  the nontrivial identity
$$\cfrac{2\pi-1}{2\sqrt{\pi^2-\pi+1}}=\cos\left(\frac{1}{3} \arctan\left(\frac{3 \sqrt{3} (-1+\pi ) \pi }{2-3 \pi -3 \pi ^2+2 \pi ^3}\right)\right).$$
\end{example} 
A wonderful and famous Ramanujan identity on RCPs is 
\begin{equation} \label{rama} x_1^{\frac{1}{3}}+x_2^{\frac{1}{3}}+x_3^{\frac{1}{3}}=\left(-p-6r^{\frac{1}{3}}+3(9r-pq)^{\frac{1}{3}}\right)^{\frac{1}{3}} \end{equation}
where $x_1,x_2,x_3$ are the RCP zeros (see, e.g., Berndt \cite[p.\ 22]{Br}). Using our expression \eqref{rcp-extended} and theorem \ref{root-rcp}, we rewrite identity (\ref{rama}) as
\begin{multline} \label{iden1}
\alpha^{\frac{1}{3}}=-\left(\frac{s^2}{s-\alpha}\right)^{\frac{1}{3}}-\left(-\frac{s(s-\alpha)}{\alpha}\right)^{\frac{1}{3}}{}+\\
{}+\left(\frac{s^3+3 s^2 \alpha-6 s \alpha^2+\alpha^3}{-s \alpha+\alpha^2}+3\left(s^2-s \alpha+\alpha^2\right)\left(\frac{s }{\alpha^2 (s-\alpha)^2}\right)^{\frac{1}{3}}\right)^{\frac{1}{3}}.
\end{multline}

If we choose suitable real values for $\alpha$ and $s$,  equation \eqref{iden1} generates infinite nontrivial identities.
\begin{example}
For $\alpha=\pi^3$ and $s=1$, we obtain
$$\pi = \frac{1}{\left(-1+\pi ^3\right)^{1/3}}-\frac{\left(-1+\pi ^3\right)^{1/3}}{\pi }+\left(\frac{3 \left(1-\pi ^3+\pi ^6\right)}{\pi ^2 \left(-1+\pi ^3\right)^{2/3}}+\frac{1+3 \pi ^3-6 \pi ^6+\pi ^9}{-\pi ^3+\pi ^6}\right)^{1/3}.
$$
\end{example}

\section{Gaussian periods and Ramanujan--Shanks polynomials}

\begin{definition}
Let $m$ be a positive integer and $p$ a prime number satisfying the condition $p\equiv 1$ (mod $m$). We consider the multiplicative group $G_{m,p}$ of the $m$--th powers mod $p$ whose order is $\frac{p-1}{m}$. The \emph{Gaussian periods} are
$$\eta_{m,p}^{(k)}=\sum_{j\in C_k}e^{\frac{2j\pi i}{p}},$$
where $C_0=G_{m,p}$ and for $k\geq 1$ $C_k$ are the cosets of $G_{m,p}$ in $\mathbb Z_p^*$. 
\end{definition}
\begin{example}
If $p=13$ and $m=3$, we have $G_{3,13}=C_0=\{1,5,8,12\}$, $C_1=\{2,3,10,11\}$ and $C_2=\{4,6,7,9\}$. The cubic Gaussian periods are
$$\eta_{3,13}^{(0)}=e^{-\frac{2\pi}{13}i}+e^{\frac{2\pi}{13}i}-e^{-\frac{3\pi}{13}i}-e^{\frac{3\pi}{13}i}=2\cos\left( \cfrac{2\pi}{13} \right)- 2\cos\left( \cfrac{3\pi}{13} \right),$$
$$\eta_{3,13}^{(1)}=e^{-\frac{4\pi}{13}i}+e^{\frac{4\pi}{13}i}+e^{-\frac{6\pi}{13}i}+e^{\frac{6\pi}{13}i}=2\cos\left( \cfrac{4\pi}{13} \right)+ 2\cos\left( \cfrac{6\pi}{13} \right),$$
$$\eta_{3,13}^{(2)}=e^{-\frac{8\pi}{13}i}+e^{\frac{8\pi}{13}i}+e^{-\frac{12\pi}{13}i}+e^{\frac{12\pi}{13}i}=2\cos\left( \cfrac{8\pi}{13} \right)+ 2\cos\left( \cfrac{12\pi}{13} \right).$$
\end{example}

In the following, we focus on cubic Gaussian periods, i.e., we set $m=3$ and we write $\eta_p^{(k)}=\eta_{3,p}^{(k)}$. Lehmer \cite{Le} connected the Shanks polynomials zeros with cubic Gaussian periods. In particular, when $h\in\mathbb Z$ and $3\not|h$, we call \emph{Shanks prime} the value of $\tau(h)=h^2+3h+9$ when it is a prime number.
\begin{rem} The sequence of Shanks primes in OEIS \cite{oeis} is \seqnum{A005471}:
$$(7, 13, 19, 37, 79, 97, 139,\ldots).$$
\end{rem} 
When $\tau(h)=p$ is a Shanks prime, the elements of $\zeta(h,-1)$  are fundamental units of $\mathbb Q(\alpha)$, differing from the cubic Gaussian periods by the integer number $\cfrac{L-1}{6}$, where $L=\pm(2h+3)$. Moreover, the minimal polynomial of $\eta_p^{(0)},\eta_p^{(1)},\eta_p^{(2)}$ is
\begin{equation} \label{minimal} x^3+x^2-\cfrac{p-1}{3}x-\cfrac{(L+3)p-1}{27}. \end{equation}
The sign of the integer $L$ depends on the parameter $h$. If $h\equiv 1$ (mod $3$), then $L=-(2h+3)$ and $\cfrac{(L+3)p-1}{27}$ is an integer. If $h\equiv 2$ (mod $3$), then $L=2h+3$. Thus, we can write the polynomial \eqref{minimal} as
$$G_1(h,x)=x^3+x^2-\cfrac{p-1}{3}x+\cfrac{2hp+1}{27},\quad \text{if} \ h\equiv 1 \ (\text{mod}\ 3),$$
$$G_2(h,x)=x^3+x^2-\cfrac{p-1}{3}x-\cfrac{(6+2h)p+1}{27},\quad \text{if} \ h\equiv 2 \ (\text{mod}\ 3).$$
Furthermore, the SCP $\rho(h,-1,x)$ has zeros
\begin{equation} \label{p1}  \cfrac{h-1}{3}-\eta_p^{(k)},\quad k=0,1,2\quad \text{if} \ h\equiv 1 \ (\text{mod}\ 3), \end{equation}
\begin{equation} \label{p2} \cfrac{h+1}{3}+\eta_p^{(k)},\quad k=0,1,2\quad \text{if} \ h\equiv 2\ (\text{mod} \ 3). \end{equation}
\begin{rem}
Using the expression of the SCP zeros provided by theorem \ref{root-scp}, we have a new explicit expression for cubic Gaussian periods involving $\cos$ and $\arctan$ functions.
\end{rem}
\begin{definition}
We call \emph{Lehmer--Ramanujan cubic polynomials} (LRCPs), a RCPs $\rho(h,s,x)$ such that $h \in \mathbb{Z}, 3\not | h$ and $\tau(h)$ is a prime number.
\end{definition}
Considering the relationships between zeros of Ramanujan and Shanks polynomials showed in the previous section, we can express LRCPs zeros in terms of cubic Gaussian periods.
\begin{theorem}
Let $\rho(h,s,x)$ be a LRCP and 
$$\theta_1(h,s,x)=\cfrac{x}{s}+\cfrac{h-1}{3},\quad \theta_2(h,s,x)=-\cfrac{x}{s}-\cfrac{h+1}{3},$$
$$ \gamma_1(h,s,x)=-\cfrac{1}{3}s(h-1-3x),\quad \gamma_2(h,s,x)=-\cfrac{1}{3}s(h+1+3x). $$

The zeros of $\rho(h,s,x)$ are

\begin{equation}\label{rootscrp1}
\gamma_1(h,s,\eta_p^{(k)}),\quad k=0,1,2 \quad \text{if} \quad h\equiv 1 \ (\text{mod} \ 3),\end{equation}
\begin{equation}\label{rootscrp2}
\gamma_2(h,s,\eta_p^{(k)}),\quad k=0,1,2 \quad \text{if} \quad h\equiv 2 \ (\text{mod}\ 3).\end{equation}
Therefore we have
\begin{equation}\label{idscrp1}
\rho(h,s,x)=s^3G_1(h,\theta_1(h,s,x))\quad\text{if} \quad h\equiv 1 \ (\text{mod}\ 3),\end{equation}
\begin{equation}\label{idscrp2}
\rho(h,s,x)=-s^3G_2(h,\theta_2(h,s,x))\quad\text{if} \quad h\equiv 2 \ (\text{mod}\ 3).\end{equation}
\end{theorem}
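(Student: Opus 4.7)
The plan is to chain together two facts already established in the excerpt: the link (\ref{zeta}) between the RCP zero set $\zeta(h,s)$ and the SCP zero set $\zeta(h,-1)$, and the descriptions (\ref{p1})--(\ref{p2}) of SCP zeros in terms of cubic Gaussian periods when $\tau(h)=p$ is a Shanks prime. Everything after that is a routine algebraic rearrangement, so the goal is mainly to set up the bookkeeping cleanly.

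First I would treat the case $h\equiv 1\pmod 3$. By (\ref{p1}), the zeros of $\rho(h,-1,x)$ are $\frac{h-1}{3}-\eta_p^{(k)}$ for $k=0,1,2$, so by (\ref{zeta}) (equivalently the relation $\zeta(h,s)=-s\,\zeta(h,-1)$ that underlies (\ref{zeta1})), the zeros of $\rho(h,s,x)$ are
\[
-s\!\left(\tfrac{h-1}{3}-\eta_p^{(k)}\right)=-\tfrac{1}{3}s\bigl(h-1-3\eta_p^{(k)}\bigr)=\gamma_1\!\bigl(h,s,\eta_p^{(k)}\bigr),
\]
which proves (\ref{rootscrp1}). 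The case $h\equiv 2\pmod 3$ is identical, using (\ref{p2}) instead: the SCP zeros $\tfrac{h+1}{3}+\eta_p^{(k)}$ become $-s\bigl(\tfrac{h+1}{3}+\eta_p^{(k)}\bigr)=\gamma_2(h,s,\eta_p^{(k)})$, giving (\ref{rootscrp2}).

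For the polynomial identities I would use that $G_1(h,x)$ (respectively $G_2(h,x)$) is the monic minimal polynomial over $\mathbb Q$ of the three periods $\eta_p^{(0)},\eta_p^{(1)},\eta_p^{(2)}$, so $G_j(h,x)=\prod_{k=0}^{2}(x-\eta_p^{(k)})$. For $h\equiv 1\pmod 3$, substituting $x\mapsto\theta_1(h,s,x)=\frac{x}{s}+\frac{h-1}{3}$ and multiplying by $s^3$ distributes one factor of $s$ into each of the three linear factors:
\[
s^3G_1\bigl(h,\theta_1(h,s,x)\bigr)=\prod_{k=0}^{2}\Bigl(x+\tfrac{s(h-1)}{3}-s\,\eta_p^{(k)}\Bigr)=\prod_{k=0}^{2}\bigl(x-\gamma_1(h,s,\eta_p^{(k)})\bigr)=\rho(h,s,x),
\]
which is (\ref{idscrp1}). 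The $h\equiv 2\pmod 3$ case is analogous, but now $\theta_2(h,s,x)=-\frac{x}{s}-\frac{h+1}{3}$ carries a sign, so after distributing $s$ into the three factors one extracts an overall $(-1)^3=-1$ that is absorbed by the sign in front of $s^3G_2$, giving (\ref{idscrp2}).

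There is no real obstacle here; the only subtle point is being careful that for each residue class of $h$ modulo $3$ one must use the correct variant of Lehmer's minimal polynomial ($G_1$ vs.\ $G_2$), and the correct linear correction ($\theta_1$ vs.\ $\theta_2$), because the sign of $L=\pm(2h+3)$ and hence the constant term of the minimal polynomial flip between the two cases. Once this dictionary is set up, the identities collapse to the observation that multiplying a monic cubic in $\theta$ by $s^3$ is the same as replacing $\theta-\eta_p^{(k)}$ by $s\theta-s\eta_p^{(k)}$ in every factor.
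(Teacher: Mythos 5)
Your proof is correct and follows essentially the same route as the paper: it combines the relation $\zeta(h,s)=-s\,\zeta(h,-1)$ with the period descriptions of the SCP zeros to get the zero formulas, and then verifies the polynomial identities by matching $\rho(h,s,x)$ against the rescaled, shifted minimal polynomials $G_1$, $G_2$. The only cosmetic difference is that you argue via the factored form $G_j(h,x)=\prod_k(x-\eta_p^{(k)})$ while the paper compares expanded coefficients, which amounts to the same computation.
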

\begin{proof}
By corollary \ref{cor1}, the set $\zeta(h,s)$ corresponds to the set $-s\zeta(h,-1)$. Thus, by equations \eqref{p1} and \eqref{p2}, we obtain the equalities \eqref{rootscrp1} and \eqref{rootscrp2}. 
By comparison between the explicit expressions of $\rho(h,s,x)$, $G_1(h,x)$ and $G_2(h,x)$ we easily prove the identities \eqref{idscrp1} and \eqref{idscrp2}.
\end{proof}
Now, the Ramanujan identity \eqref{rama} provides us the following new relations involving the cubic Gaussian periods:
\small
\begin{equation} \label{gaussrama}
\left(\frac{h-1}{3}-\eta_p^{(0)}\right)^{\frac{1}{3}}+\left(\frac{h-1}{3}-\eta_p^{(1)}\right)^{\frac{1}{3}}+\left(\frac{h-1}{3}-\eta_p^{(2)}\right)^{\frac{1}{3}}=\left(6+h-3p^{\frac{1}{3}}\right)^{\frac{1}{3}}
\end{equation}
\normalsize
and
\small
\begin{equation} \label{gaussrama2}
\left(\frac{h+1}{3}+\eta_p^{(0)}\right)^{\frac{1}{3}}+\left(\frac{h+1}{3}+\eta_p^{(1)}\right)^{\frac{1}{3}}+\left(\frac{h+1}{3}+\eta_p^{(2)}\right)^{\frac{1}{3}}=\left(6+h-3p^{\frac{1}{3}}\right)^{\frac{1}{3}}
\end{equation}
\normalsize
when $h\equiv 1$ (mod $3$) and $h\equiv 2$ (mod $3$), respectively.
D. H. Lehmer and E. Lehmer \cite{project} studied differences of Gaussian periods, proving that their minimal polynomial is $x^3-px+p$. Let us consider
$$\delta_p^{(0)}=\eta_p^{(0)}-\eta_p^{(1)},\quad \delta_p^{(1)}=\eta_p^{(1)}-\eta_p^{(2)}, \quad \delta_p^{(2)}=\eta_p^{(2)}-\eta_p^{(0)}.$$
Using previous results, we provide a closed formula for these differences.
\begin{proposition}
If $p=\tau(h)$ is a prime number and $\theta_h=\cfrac{3\sqrt{3}}{3+2h}$, then
$$\delta_p^{(0)}=\pm2\sqrt{\frac{p}{3}}\cos \left(\frac{1}{3}\arctan\left( \frac{1}{\theta_h}\right) \right), \quad \delta_p^{(1)}=\pm2\sqrt{\frac{p}{3}}\cos \left(\frac{1}{3}\left(\arctan \left(\frac{1}{\theta_h}\right)+2\pi\right) \right),$$
$$\delta_p^{(2)}=\pm2\sqrt{\frac{p}{3}}\cos \left(\frac{1}{3}\left(\arctan\left( \frac{1}{\theta_h}\right)+4\pi\right) \right).$$
\end{proposition}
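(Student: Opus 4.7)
The plan is to combine the Lehmer--Lehmer minimal polynomial $x^{3}-px+p$ for $\delta_{p}^{(0)},\delta_{p}^{(1)},\delta_{p}^{(2)}$ with the trigonometric root formula from Lemma \ref{newsolve}. First I would identify the coefficients for $Q(x)=x^{3}-px+p$, namely $a=1$, $b=0$, $c=-p$, $d=p$, and compute
\[
e=c-\tfrac{b^{2}}{3a}=-p,\qquad f=d+\tfrac{2b^{3}}{27a^{2}}-\tfrac{bc}{3a}=p,
\]
so that the associated quadratic is $P(w)=w^{2}+pw+\tfrac{p^{3}}{27}$. Its discriminant is $p^{2}-\tfrac{4p^{3}}{27}=\tfrac{p^{2}(27-4p)}{27}$; since $p=\tau(h)=h^{2}+3h+9\ge 7$ is a Shanks prime we have $27-4p<0$, so $P(w)$ does have two complex conjugate roots $\alpha\pm i\beta$, with
\[
\alpha=-\tfrac{p}{2}<0,\qquad \beta=\tfrac{p}{2}\sqrt{\tfrac{4p-27}{27}}.
\]

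Next I would exploit the identity $4p-27=4h^{2}+12h+9=(2h+3)^{2}$ to simplify $\beta$ to $\tfrac{p\,|2h+3|}{6\sqrt{3}}$, and then compute
\[
\alpha^{2}+\beta^{2}=\tfrac{p^{2}}{4}\Bigl(1+\tfrac{4p-27}{27}\Bigr)=\tfrac{p^{3}}{27},\qquad \tfrac{\beta}{\alpha}=\mp\tfrac{2h+3}{3\sqrt{3}}=\mp\tfrac{1}{\theta_{h}},
\]
the sign depending on that of $2h+3$. Consequently $\sqrt[3]{\sqrt{\alpha^{2}+\beta^{2}}}=\sqrt{p/3}$. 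Since $\alpha<0$, the second branch of Lemma \ref{newsolve} applies with $-b/(3a)=0$, yielding the three roots
\[
-2\sqrt{\tfrac{p}{3}}\,\cos\!\Bigl(\tfrac{1}{3}\bigl(\arctan(\beta/\alpha)+k\pi\bigr)\Bigr),\qquad k=0,2,4,
\]
of $Q(x)$, which by the Lehmer--Lehmer result are precisely $\delta_{p}^{(0)},\delta_{p}^{(1)},\delta_{p}^{(2)}$ in some order.

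Finally I would reconcile this expression with the form stated in the proposition. The issue is the possible sign flip in $\arctan(\beta/\alpha)$ versus $\arctan(1/\theta_{h})$. Since $\cos$ is even and the three angles differ by multiples of $\pi/3$, replacing $\arctan(1/\theta_{h})$ by $-\arctan(1/\theta_{h})$ merely permutes the three cosine values (one checks that $\cos(\tfrac{1}{3}(-x+2\pi))=\cos(\tfrac{1}{3}(x+4\pi))$ and symmetrically), so as an unordered set
\[
\Bigl\{\cos\!\bigl(\tfrac{1}{3}(\arctan(\beta/\alpha)+k\pi)\bigr)\Bigr\}_{k=0,2,4}=\Bigl\{\cos\!\bigl(\tfrac{1}{3}(\arctan(1/\theta_{h})+k\pi)\bigr)\Bigr\}_{k=0,2,4}.
\]
Absorbing the overall minus sign in front of $2\sqrt{p/3}$ and this permutation into the global $\pm$ yields exactly the three formulas in the statement.

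The main obstacle is this last bookkeeping step: matching which specific root $\delta_{p}^{(i)}$ corresponds to which choice of $k$ and which sign. However, since the proposition only asserts the identity up to $\pm$, it suffices to verify the equality of the two three--element sets, which follows from the evenness of cosine and the $2\pi$-periodicity as indicated above. All other steps are routine computation once the polynomial $x^{3}-px+p$ is fed into Lemma \ref{newsolve}.
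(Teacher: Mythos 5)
Your proof is correct, but it takes a genuinely different route from the paper. The paper never touches the Lehmer--Lehmer minimal polynomial $x^3-px+p$ directly: it writes $\delta_p^{(0)}=\alpha-\eta_{-1}(\alpha)$ for a zero $\alpha$ of the SCP $\rho(h,-1,x)$, substitutes the two trigonometric expressions for SCP zeros from Theorem \ref{root-scp} (the $k=0$ and $k=2$ branches), and then grinds through angle-addition formulas together with the identity $\arctan x+\arctan\frac{1}{x}=\pm\frac{\pi}{2}$ to collapse the difference of two cosines into a single cosine of $\frac{1}{3}\arctan(1/\theta_h)$. You instead feed $x^3-px+p$ itself into Lemma \ref{newsolve}; the key observations that make this work are $4p-27=(2h+3)^2$, which is why $\beta/\alpha=\mp 1/\theta_h$ and hence why $\arctan(1/\theta_h)$ appears without any complementary-angle identity, and $\alpha^2+\beta^2=p^3/27$, which gives the amplitude $2\sqrt{p/3}$ immediately. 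Your computations check out (including $p\ge 7$ so the discriminant is negative, and $2h+3\ne 0$ since $h\in\mathbb{Z}$), and your reduction of the sign/permutation bookkeeping to the evenness and $2\pi$-periodicity of cosine is sound; note that this only pins down the three differences as a set, so the assignment of a specific $k$ to a specific $\delta_p^{(i)}$ is determined only up to relabeling --- but the paper's own proof has exactly the same looseness (it never fixes which zero $\alpha$ is), and the $\pm$ signs in the statement absorb it. Your approach buys a cleaner computation at the cost of leaning on the external Lehmer--Lehmer fact about the minimal polynomial of the differences, whereas the paper leans instead on the external identification of SCP zeros with shifted Gaussian periods; since the paper states both facts, either input is fair game.
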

\begin{proof}
Thanks to the relation between Gaussian periods and Shanks polynomials, we know that $\delta_p^{(0)}=\alpha-\eta_{-1}(\alpha)$, where $\alpha$ is a zero of $\rho(h,-1,x)$. Moreover, by theorem \ref{root-scp} we have
$$\delta_p^{(0)}= \pm \frac{2}{3}\sqrt{p}\cos\left(\frac{1}{3}\arctan (\theta_h)\right)\mp \frac{2}{3}\sqrt{p}\cos\left(\frac{1}{3}\left(\arctan (\theta_h)+2\pi\right)\right)=$$
$$=\frac{\sqrt{p}}{3}\left(\pm2\cos\left(\frac{1}{3}\arctan (\theta_h)\right)\pm\cos\left(\frac{1}{3}\arctan (\theta_h)\right)\pm\sqrt{3}\sin\left(\frac{1}{3}\arctan( \theta_h)\right)\right)=$$
$$=\pm\sqrt{p}\cos\left(\frac{1}{3}\arctan (\theta_h)\right)\pm\sqrt{\frac{p}{3}}\sin\left(\frac{1}{3}\arctan (\theta_h)\right).$$
Recalling that, $\arctan x+ \arctan\frac{1}{x}=\frac{\pi}{2}$ if $x>0$ and $\arctan x+ \arctan\frac{1}{x}=-\frac{\pi}{2}$ if $x<0$, we have
\begin{eqnarray*}\delta_p^{(0)}&=&  \pm\cfrac{\sqrt{3}}{2}\sqrt{p}\cos \left(\frac{1}{3}\arctan\left( \frac{1}{\theta_h}\right)\right)\pm\cfrac{\sqrt{p}}{2}\sin \left(\frac{1}{3}\arctan\left( \frac{1}{\theta_h}\right)\right)\pm\\
                               &\pm&\cfrac{1}{2}\sqrt{\frac{p}{3}}\cos \left(\frac{1}{3}\arctan\left( \frac{1}{\theta_h}\right)\right)
\cfrac{\sqrt{3}}{2}\sqrt{\frac{p}{3}}\sin \left(\frac{1}{3}\arctan \left(\frac{1}{\theta_h}\right)\right)=\\
                               &=&\pm2\sqrt{\frac{p}{3}}\cos\left(\frac{1}{3}\arctan\left( \frac{1}{\theta_h}\right)\right).\end{eqnarray*}
Similarly, we obtain the thesis for $\delta_p^{(1)}$ and $\delta_p^{(2)}$.
\end{proof}

\section{On a Jefferey conjecture}
In this section we use our results on SCPs zeros to give a proof that the sequence $$\seqnum{A198636}=(3, 5, 13, 38, 117, 370, 1186,\ldots)$$ in OEIS \cite{oeis} is a linear recurrent sequence. We also prove a related conjecture due to L. E. Jefferey (Jan 21 2012).

Let $\alpha, \beta,$ and $\gamma$ be the zeros of the SCP $\rho(h,-1,x)$. We consider the integer sequence $(A(k,n))_{n=0}^{+\infty}$, where $k$ is a fixed integer and
\begin{equation} \label{seqa}  A(k,n)=\alpha^{kn}+\beta^{kn}+\gamma^{kn}.\end{equation}
Moreover, the companion matrix $M$ of the SCP is
$$M=\begin{pmatrix} 0 & 1 & 0 \cr 0 & 0 & 1 \cr 1 & 3+h & h  \end{pmatrix}.$$ Clearly, we have
$$A(1,n)=Tr(M^n),$$
and $(A(1,n))_{n=1}^{+\infty}$ is a linear recurrent sequence with characteristic polynomial $\rho(h,-1,x)$. Similarly, 
$$A(k,n)=Tr(M^{kn}),$$
and $(A(k,n))_{n=0}^{+\infty}$ is a linear recurrent sequence whose characteristic polynomial is $$x^3-A(k,1)x^2+B(k,1)x-1,$$ with zeros $\alpha^k, \beta^k$ and $\gamma^k .$
When $h=-1$, the SCP $\rho(-1,-1,x)$ has zeros
$$\bar\alpha=2\cos\left(\cfrac{2\pi}{7}\right),\quad \bar\beta=2\cos\left(\cfrac{4\pi}{7}\right),\quad \bar\gamma=2\cos\left(\cfrac{8\pi}{7}\right)$$ 
and the integer sequences arising from this polynomial are particularly interesting. These sequences have been deeply studied in some recent works of Witula and Slota \cite{Slota, quasifib, Witula2}.
\begin{lemma}\label{A(2,n)}
Let $(a_n)_{n=0}^{+\infty}$ be the linear recurrent sequence with characteristic polynomial $x^3-5x^2+6x-1$ and initial conditions $a_0=3$, $a_1=5$ and $a_2=13$,
then
$$a_n=2^{2n}\left( \cos^{2n}\left(\cfrac{2\pi}{7}\right) + \cos^{2n}\left(\cfrac{4\pi}{7}\right) + \cos^{2n}\left(\cfrac{8\pi}{7}\right) \right),$$
for $n=0,1,2,\ldots$.
\end{lemma}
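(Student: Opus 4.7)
The plan is to recognize the claim as an instance of the general construction sketched just before the lemma, specialized to $h=-1$ and $k=2$. With the SCP $\rho(-1,-1,x)=x^3+x^2-2x-1$ having zeros $\bar\alpha=2\cos(2\pi/7),\ \bar\beta=2\cos(4\pi/7),\ \bar\gamma=2\cos(8\pi/7)$, the quantity on the right-hand side of the lemma is exactly
\[
A(2,n)=\bar\alpha^{2n}+\bar\beta^{2n}+\bar\gamma^{2n}=2^{2n}\Bigl(\cos^{2n}(2\pi/7)+\cos^{2n}(4\pi/7)+\cos^{2n}(8\pi/7)\Bigr).
\]
So my job reduces to checking that $(A(2,n))_{n\ge0}$ is \emph{the} sequence $(a_n)$ in the statement, i.e.\ that it satisfies the same third-order linear recurrence and starts with the same three initial values.

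Next, I invoke the remark preceding the lemma: $(A(2,n))_{n\ge0}$ is linear recurrent with characteristic polynomial $x^{3}-A(2,1)\,x^{2}+B(2,1)\,x-1$, whose roots are $\bar\alpha^{2},\bar\beta^{2},\bar\gamma^{2}$. To identify this polynomial I use Vieta's formulas on $\rho(-1,-1,x)=x^3+x^2-2x-1$, which give the elementary symmetric functions
\[
e_1=\bar\alpha+\bar\beta+\bar\gamma=-1,\qquad
e_2=\bar\alpha\bar\beta+\bar\beta\bar\gamma+\bar\gamma\bar\alpha=-2,\qquad
e_3=\bar\alpha\bar\beta\bar\gamma=1.
\]
Then standard Newton-type manipulations yield $A(2,1)=e_1^{2}-2e_2=1+4=5$, $B(2,1)=e_2^{2}-2e_1e_3=4+2=6$, and the product $\bar\alpha^{2}\bar\beta^{2}\bar\gamma^{2}=e_3^{2}=1$. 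Hence the characteristic polynomial of $(A(2,n))$ is precisely $x^{3}-5x^{2}+6x-1$, matching the one prescribed for $(a_n)$.

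Finally, I verify the initial conditions. Clearly $A(2,0)=3$ and $A(2,1)=5$ by the computation above. For $A(2,2)=\bar\alpha^{4}+\bar\beta^{4}+\bar\gamma^{4}$ I apply Newton's identity to the squared roots: writing $p_k=\sum \bar\alpha^{2k}$, we have $p_2=A(2,1)^{2}-2B(2,1)=25-12=13$. Thus $A(2,0)=a_0$, $A(2,1)=a_1$, $A(2,2)=a_2$, and since both sequences satisfy the same third-order linear recurrence with the same three initial values, they coincide for every $n\ge 0$, which is the claim. There is no real obstacle here beyond correctly extracting $e_1,e_2,e_3$ from $\rho(-1,-1,x)$ and applying the standard symmetric-function identities; the substantive content of the lemma is entirely carried by the general framework set up earlier in the section.
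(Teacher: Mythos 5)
Your proof is correct and follows essentially the same route as the paper: both identify the right-hand side as $A(2,n)=\bar\alpha^{2n}+\bar\beta^{2n}+\bar\gamma^{2n}$, verify that the squared roots have characteristic polynomial $x^3-5x^2+6x-1$, check the initial values $3,5,13$, and conclude by uniqueness of a linear recurrence with given initial conditions. The only difference is computational bookkeeping: the paper reads off the characteristic polynomial and the values $A(2,0),A(2,1),A(2,2)$ from the companion matrix $M^2$ and the traces of $M^0,M^2,M^4$, whereas you obtain the same data via Vieta's formulas and Newton-type symmetric-function identities.
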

\begin{proof}
The matrix
$$M^2=\begin{pmatrix} 0 & 0 & 1 \cr 1 & 2 & -1 \cr -1 & -1 & 3 \end{pmatrix}$$
has the same characteristic polynomial $x^3-5x^2+6x-1$ of the sequence $(A(2,n))_{n=1}^{+\infty}$. Moreover, evaluating the traces of the matrices $M^0$, $M^2$ and $M^4$, we observe that $A(2,0)=3$, $A(2,1)=5$ and $A(2,2)=13$, i.e., $a_n=A(2,n)$, for every $n=0,1,2,\ldots$. Thus, by equation \eqref{seqa}, we have the thesis.
\end{proof}
\begin{theorem}
The sequence  $(A_n)_{n=0}^{+\infty}=$\seqnum{A198636} in OEIS \cite{oeis} is a linear recurrent sequence corresponding to the sequence $(A(2,n))_{n=1}^{+\infty}$, in particular 
$$\begin{cases} A_0=3 \cr A_1=5 \cr A_2=13 \cr A_{n+3}=5A_{n+2}-6A_{n+1}+A_n.\end{cases}$$
Moreover
$$A_n=2^{2n}\left(\cos^{2n}\left(\frac{\pi}{7}\right)+\cos^{2n}\left(\frac{2\pi}{7}\right)+\cos^{2n}\left(\frac{3\pi}{7}\right)\right),$$
i.e., the Jefferey conjecture is true.
\end{theorem}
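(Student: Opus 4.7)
The plan is to reduce the theorem directly to the preceding lemma, which already identifies the sequence $(A(2,n))$ with an explicit trigonometric formula and exhibits it as a linear recurrent sequence. So the only remaining work is (a) to match $A_n=$\seqnum{A198636} with $A(2,n)$, and (b) to convert the trigonometric expression coming from the SCP zeros $2\cos(2\pi/7), 2\cos(4\pi/7), 2\cos(8\pi/7)$ into the Jefferey form involving $\cos(\pi/7), \cos(2\pi/7), \cos(3\pi/7)$.

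First I would verify the recurrence and initial conditions. By Lemma~\ref{A(2,n)}, $(A(2,n))_{n\ge 0}$ is determined by $a_0=3,\,a_1=5,\,a_2=13$ together with the recurrence $a_{n+3}=5a_{n+2}-6a_{n+1}+a_n$ (whose characteristic polynomial is $x^3-5x^2+6x-1$). It then suffices to check that the first few terms of \seqnum{A198636}, namely $3,5,13,38,117,370,1186$, are reproduced by iterating this recurrence from $(3,5,13)$: a direct computation gives $5\cdot 13-6\cdot 5+3=38$, $5\cdot 38-6\cdot 13+5=117$, $5\cdot 117-6\cdot 38+13=370$, $5\cdot 370-6\cdot 117+38=1186$, matching the listed OEIS values. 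Hence $A_n=A(2,n)$ for all $n$, which establishes the recurrence part of the statement.

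Next I would derive the Jefferey formula. From $A_n=A(2,n)$ and Lemma~\ref{A(2,n)} we immediately get
\begin{equation*}
A_n=2^{2n}\left(\cos^{2n}\!\left(\tfrac{2\pi}{7}\right)+\cos^{2n}\!\left(\tfrac{4\pi}{7}\right)+\cos^{2n}\!\left(\tfrac{8\pi}{7}\right)\right).
\end{equation*}
The key observation is that the exponent $2n$ is even, so only the absolute values of the cosines matter. Using $\cos(8\pi/7)=\cos(8\pi/7-2\pi)=\cos(-6\pi/7)=-\cos(\pi/7)$ and $\cos(4\pi/7)=-\cos(\pi-4\pi/7)=-\cos(3\pi/7)$, we obtain $\cos^{2n}(8\pi/7)=\cos^{2n}(\pi/7)$ and $\cos^{2n}(4\pi/7)=\cos^{2n}(3\pi/7)$, while $\cos^{2n}(2\pi/7)$ is unchanged. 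Substituting into the previous equality yields the claimed Jefferey identity.

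There is no real obstacle here: the genuine content — namely that $A(2,n)=\mathrm{Tr}(M^{2n})$ is linearly recurrent with the stated characteristic polynomial and the stated initial values — is already packaged in Lemma~\ref{A(2,n)}. The only things the proof has to do are an arithmetic check of three terms (to identify $A_n$ with $A(2,n)$) and the trivial parity observation about cosine, so the argument is short and essentially formal.
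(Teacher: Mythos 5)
There is a genuine gap in the first half of your argument. The sequence \seqnum{A198636} is not \emph{defined} by the recurrence $A_{n+3}=5A_{n+2}-6A_{n+1}+A_n$; it is defined combinatorially, as $A_n=\tfrac{1}{2}w(6,2n)$ where $w(6,l)$ counts closed walks of length $l$ on the path graph $P_6$. The whole content of the theorem (and of Jefferey's conjecture) is that this combinatorially defined sequence satisfies the stated recurrence and closed form \emph{for all} $n$. Checking that iterating the recurrence from $(3,5,13)$ reproduces the seven listed terms $3,5,13,38,117,370,1186$ verifies nothing beyond those seven terms; you never establish the identity $A_n=A(2,n)$ for general $n$, so the step ``Hence $A_n=A(2,n)$ for all $n$'' is unjustified and the conjecture remains unproved under your argument.

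The paper closes exactly this gap by working from the definition: it writes $w(6,l)=\mathrm{Tr}(J_6^{\,l})=\sum_{j=1}^{6}\lambda_j^{\,l}$ for the adjacency matrix $J_6$ of $P_6$, identifies its characteristic polynomial with the Chebyshev-type polynomial $S_6(x)=U_6(x/2)$ so that $\lambda_j=2\cos\!\left(\tfrac{j\pi}{7}\right)$, $j=1,\dots,6$, and then uses the evenness of the exponent $2n$ (the same parity observation you make in your second half, which is correct) to get $w(6,2n)=2^{2n+1}\left(\cos^{2n}\!\left(\tfrac{2\pi}{7}\right)+\cos^{2n}\!\left(\tfrac{4\pi}{7}\right)+\cos^{2n}\!\left(\tfrac{8\pi}{7}\right)\right)$. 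Combined with Lemma~\ref{A(2,n)} this gives $A_n=\tfrac{1}{2}w(6,2n)=A(2,n)$ as an identity valid for every $n$, from which both the recurrence and the Jefferey formula follow. Your reduction to the lemma and your cosine manipulations are fine; what is missing is precisely this spectral computation linking the walk-counting definition of the OEIS entry to $A(2,n)$.
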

\begin{proof}
We recall that $A_n= \frac{w(6,2n)}{2}$ for all $n\geq0$, where $w(6,l)$ is the total number of closed walks of length $l$ on the graph $P_6$, the simple path with 6 points (vertices) and 5 lines (or edges). 
As explained in the comments related to the sequence \seqnum{A198632} in OEIS \cite{oeis}, if we consider the adjacency matrix $J_N$ of the graph $P_N$, we have $$w(N,l)=Tr(J_N^l)=\sum_{j=1}^{N}\lambda_{j}^{l},$$
where $\lambda_j$, $j=1,\ldots,N,$ are the eigenvalues of $J_N$.
Since the matrix $J_N$ is a Jacobi matrix, its characteristic polynomial is $S_N(x)=U_N(\frac{x}{2})$, where $U_N(x)$ is the second--kind Chebyshev polynomial of degree $N$  
(see, e.g., Mason and Handscomb \cite{Mason} for a survey on Chebyishev polynomials).
The zeros of $S_N(x)$ are $\lambda_j=2\cos\left(\frac{j\pi}{N+1}\right)$, $j=1,\ldots,N$ and in paricular we have
$$ w(6,2n)=\sum_{j=1}^{6}\left(2\cos\left(\frac{j\pi}{N+1}\right)\right)^{2n}=2^{(2n+1)}\left(\cos^{2n}\left(\frac{2\pi}{7}\right)+\cos^{2n}\left(\frac{4\pi}{7}\right)+\cos^{2n}\left(\frac{8\pi}{7}\right)\right).$$
Therefore, using the results of the previous lemma \ref{A(2,n)} , we clearly obtain  $$A_n=\frac{w(6,2n)}{2}=A(2,n) \quad n=0,1,\ldots $$ and the thesis immediately follows.
\end{proof}
\section{Acknowledgements}
The authors thank sincerely the referee for his appreciations, useful suggestions and comments about this paper.

2010 {\it Mathematics Subject Classification}:
Primary 11C08; Secondary 11B83.
 \emph{Keywords:} 
Ramanujan cubic polynomial.

(Concerned with sequences \seqnum{A005471}, \seqnum{A198632} and \seqnum{A198636}.)

\end{document}